\newcommand{\mc}{\mathcal}
\newcommand{\RR}{\mathbb{R}}
\newcommand{\TAB}{\hspace{0.50cm}}
\newcommand{\IFF}{\Leftrightarrow}
\newcommand{\ii}{\mc{I}}
\newcommand{\mm}{\mc{M}}
\newcommand{\nn}{\mc{N}}
\newcommand{\bb}{\mc{B}}
\newcommand{\om}{\lambda^*}
\newcommand{\bez}{\backslash}
\newcommand{\se}{\subseteq}
\newcommand{\es}{\supseteq}
\newtheorem{defi}{Definition}[section]
\newtheorem{myth}{Theorem}[section]
\newtheorem{prop}{Proposition}[section]
\newtheorem{lem}{Lemma}[section]
\titleformat*{\section}{\filcenter}
\title{Zacny tytuł}
\date{}
\author{Marcin Michalski}
\begin{document}
\thispagestyle{empty}
\begin{flushright}
\begin{spacing}{0}
\textit{\footnotesize key words:}
\\
\textit{\footnotesize measure, category, decomposition, full subset, Luzin, nonmeasurable set}
\end{spacing}

\end{flushright}
\hfill
\begin{flushleft}
Marcin MICHALSKI\footnote{Wrocław University of Science and Technology, Wybrzeże Wyspiańskiego 27, 50-370 Wrocław. The author was supported by grant S50129/K1102 (0401/0086/16), Faculty of Fundamental Problems of Technology.}
\end{flushleft}
\vspace{0.5cm}
\begin{center}
\textbf{{\fontsize{13}{13}\selectfont REDISCOVERED THEOREM OF LUZIN}}
\end{center}
\vspace{0.5cm}
\begin{adjustwidth}{0.5cm}{0pt}
\footnotesize In 1934 N. N. Luzin proved in his short (but dense) paper \textit{Sur la decomposition des ensembles} that every set $X\subseteq \RR$ can be decomposed into two full, with respect to Lebesgue measure or category, subsets. We will try to (at least partially) decipher the reasoning of Luzin and prove this result following his idea.
\end{adjustwidth}
\vspace{0.5cm}
{\normalsize
\section{INTRODUCTION AND PRELIMINARIES}
}
\TAB Although there are more recent and stronger results on topic of decompositions, involving advanced methods (e.g. Gittik-Shelah theorem in \cite{G-S}), there is a unique charm in classic results. In this paper we will provide a detailed proof of one of such results, namely, the following theorem of Luzin (\cite{Luzin1})

\begin{myth}
Every set $X\subseteq \RR$ can be decomposed into two full, with respect to Lebesgue measure or category, subsets.
\end{myth}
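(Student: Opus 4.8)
The plan is to obtain the decomposition from a single transfinite recursion of Bernstein type, arranged so that \emph{neither} piece contains a nonempty perfect subset of $X$; fullness in both senses --- Lebesgue measure \emph{and} category, hence a fortiori the ``or'' of the statement --- will then come for free, and from the same decomposition.

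First I would isolate the observation that ties the two $\sigma$-ideals together. Call $Y\se\RR$ \emph{totally imperfect} if it contains no nonempty perfect set. By the perfect set property of Borel sets (for closed sets this is Cantor--Bendixson), every uncountable Borel set contains a nonempty perfect set, so a totally imperfect set contains no uncountable Borel set at all; in particular it contains no closed set of positive Lebesgue measure (those are uncountable) and no non-meager Borel set (non-meager sets are uncountable, as countable sets are meager). Hence, if we can write $X=A\cup B$ with $A\cap B=\0$ and both $B=X\bez A$ and $A=X\bez B$ totally imperfect, then the relative complement in $X$ of each of $A$ and $B$ meets no closed set of positive measure and no non-meager Borel set --- i.e.\ it is negligible in both the measure and the category sense --- so each of $A$ and $B$ is a full subset of $X$ with respect to measure and with respect to category. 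Thus it suffices to split $X$ so that neither half contains a perfect subset of $X$.

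For the construction, recall that $\RR$ has exactly $\cm$ perfect subsets (each is closed, and there are only $\cm$ closed sets) and that every perfect set has cardinality $\cm$. Fix an enumeration $\{P_\alpha:\alpha<\cm\}$ of all perfect subsets of $\RR$, and build points $a_\alpha,b_\alpha$ by recursion on $\alpha<\cm$: at step $\alpha$, if $P_\alpha\se X$ then --- since the set $C_\alpha=\{a_\xi,b_\xi:\xi<\alpha\}$ of previously chosen points has size $<\cm=|P_\alpha|$ --- choose two distinct points $a_\alpha,b_\alpha\in P_\alpha\bez C_\alpha$; if $P_\alpha\not\se X$, do nothing at step $\alpha$. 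Put $A_0=\{a_\alpha:\alpha<\cm\}$ and $B_0=\{b_\alpha:\alpha<\cm\}$; these are disjoint subsets of $X$. Finally set $B=B_0$ and $A=X\bez B_0$, a decomposition of $X$. If some perfect $Q\se A$, then $Q$ is a perfect subset of $X$, so $Q=P_\alpha$ for an $\alpha$ with $P_\alpha\se X$, whence $b_\alpha\in P_\alpha=Q\se A$, contradicting $b_\alpha\in B_0$ and $A\cap B_0=\0$; so $A$ contains no perfect set. Symmetrically a perfect $Q\se B$ would equal some $P_\alpha\se X$, giving $a_\alpha\in B_0=B$, again impossible; so $B$ contains no perfect set. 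By the previous paragraph, $A$ and $B$ are both full subsets of $X$ with respect to measure and with respect to category, which is the assertion of the theorem. (When $X$ itself is totally imperfect the recursion is empty; then any splitting of $X$ into two nonempty parts already works.)

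The recursion, the cardinal arithmetic, and the classical inputs used (Cantor--Bendixson, inner regularity of Lebesgue measure, meagerness of countable sets) are all routine. The substantive point --- the one I would verify most carefully --- is the reduction of the second paragraph: that total imperfectness of the relative complement genuinely yields fullness in the sense meant here, and, above all, that one and the same decomposition works for measure and for category. This simultaneity is exactly why the construction is routed through perfect sets rather than treating the two $\sigma$-ideals separately, and it is presumably the crux of Luzin's compressed argument. (Should ``full'' instead mean that $A$ has the same measurable, resp.\ Baire, hull as $X$, the scheme still covers everything except a ``thick'' $X$, and the delicate step migrates to splitting such an $X$ into two equally thick halves.)
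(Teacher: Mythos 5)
There is a genuine gap, and it sits exactly at the step you yourself flagged as the one to verify most carefully: the reduction from fullness to total imperfectness of the complement is false. From the fact that $X\bez A$ contains no perfect set you may conclude that $X\bez A$ contains no closed set of positive measure and no nonmeager Borel set, i.e.\ that its \emph{inner} measure is zero and that it is not residual in any interval; you may \emph{not} conclude that $X\bez A\in\nn$ or $X\bez A\in\mm$. Membership in these ideals is an \emph{outer} condition (being covered by a null, resp.\ meager, Borel set), and fullness of $A$ in $X$ demands precisely the outer condition: by Proposition \ref{being full with respect to measure} one needs $\lambda^*(M\cap A)=\lambda^*(M\cap X)$ for every measurable $M$, which requires controlling the outer measure of $M\cap(X\bez A)$. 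A Bernstein set in $[0,1]$ is totally imperfect yet has outer measure $1$ and is of the second category in every interval; it belongs to neither ideal. So ``totally imperfect complement'' does not yield fullness.

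The failure is not repairable within your scheme, because the construction does nothing in exactly the hard case. If $X$ itself is totally imperfect --- for instance a Bernstein set, which has full outer measure in $[0,1]$ --- then no $P_\alpha$ is contained in $X$, the recursion is vacuous, and you end with $B=\0$ and $A=X$; the empty set is certainly not full in a set of outer measure $1$. Your closing parenthetical, that for totally imperfect $X$ ``any splitting into two nonempty parts already works,'' is false for the same reason, and that case \emph{is} the theorem: the entire difficulty of Luzin's result lies in decomposing arbitrary, possibly nonmeasurable and totally imperfect, sets. (Your argument is fine when $X$ is measurable of positive measure, or has the Baire property and is nonmeager, since then every positive-measure measurable subset of $X$ genuinely contains a perfect set.) The paper instead well-orders $X$ so that initial segments are small, forms the triangle $T=\{(x,y)\in X\times X: y\prec x\}$, decomposes its vertical sections into countably many nowhere dense (resp.\ small-measure) pieces $H_n$, and extracts the two halves from horizontal sections of the $H_n$ via Lemma \ref{Lemma for Luzin} (resp.\ the Fubini-type Lemma \ref{Fubini for Lebesgue outer measure}) together with ccc and Borel hulls.
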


A need for such a endeavor arose in us mainly for two reasons. The first is curiosity since the formulation of the theorem gives an impression that it can be proved in a quite simple way. As it has occurred it is not the case. The second reason is that the paper of Luzin is not, let us say, reader -friendly. It has many shortcomings of works published at that time - it lacks precise definitions, most claims come without a proof and key reasonings are hard to interpret due to a relatively frivolous language. In the case of the category Luzin lost us right after defining the set $T$ and sets $H_n$. We took off from there on our own with the Lemma \ref{Lemma for Luzin} and the rest of of our reasoning seems not to correspond to that of Luzin. In the case of measure we managed to follow Luzin quite well, although parts where Lemma \ref{Fubini for Lebesgue outer measure} are needed, as well, as the finish, where one has to invoke the ccc property and Borel hulls, came originally without the proof.
\\
It should be also noted that we are not the only ones that found this result captivating. In the same time and independently from us E. Grzegorek and I. Labuda published a marvelous work \cite{GL} on the topic, including the result of Luzin (calling it "forgotten"), with a wide historical overview. They also took different approach - they were much more patient with the lecture of the work of Luzin and managed to translate the reasoning literally to the letter. We strongly recommend their paper for any reader interested in the historical context of the topic and analysis of similar results from that time.

We will use the standard set theoretic notation, e.g. as in \cite{JECH}. We denote the real line by $\RR$. By Greek alphabet letters  $\alpha, \beta, \gamma, \kappa, … $we denote ordinal numbers with an exception of $\lambda$ ($\lambda^*$) which denotes the Lebesgue (outer) measure on $\RR$ and $\RR^2$ (it will be clear from the context). A cardinality of any set $A$ will be denoted by $|A|$. If $|A|\leq \omega$ then we say that $A$ is countable. In the other case we shall say it is uncountable. A $\sigma$-ideal of sets of Lebesgue measure zero (linear or planar- it will be clear from the context) will be denoted by $\nn$. In the same fashion by $\mm$ we will denote a $\sigma$-ideal of sets of the first category (also: meager). Sets not belonging to $\mm$ are called of the second category. We shall denote a $\sigma$-algebra of Borel sets by $\bb$. We say that a Borel set $B$ is $\ii$-positive with respect to a $\sigma$-ideal $\ii$ (or simply: positive) if $B\notin \ii$. Let us recall some notions regarding $\sigma$-ideals and Borel sets.
\begin{defi}
We say that a $\sigma$-ideal $\ii$
\begin{itemize}
\item has a Borel base if $(\forall A\in\ii) (\exists B\in\bb\cap\ii (A\se B)$;
\item has a Borel hull property if for $(\forall A) (\exists B\in\bb) (A\se B \textnormal{ and } (\forall B'\in\bb) (A\se B'\se B) (B\bez B'\in\ii))$. We call such a set $B$ a Borel hull of $A$, $B=[A]_\ii$.
\end{itemize}
\end{defi}
Both $\mm$ and $\nn$ have Borel bases, since every meager set is contained in some $F_\sigma$ meager set and each null set is contained in a $G_\delta$ set of measure zero. Also both $\mm$ and $\nn$ satisfy countable chain condition (briefly: ccc) property (there is no uncountable family of positive pairwise disjoint Borel sets), so they have the Borel hull property. We give a short proof of this fact.

\begin{prop}
If a $\sigma$  $\mc{I}$ is ccc and has a Borel base, then it has the Borel hull property.
\end{prop}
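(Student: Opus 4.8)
The plan is to realize $[A]_\ii$ as the complement of a countable ``essential union'' of the Borel sets disjoint from $A$. The heart of the matter is the following standard consequence of the ccc: for every nonempty family $\mc{G}\se\bb$ there is a countable $\mc{G}_0\se\mc{G}$ such that $F\bez\bigcup\mc{G}_0\in\ii$ for all $F\in\mc{G}$. I would obtain it by transfinite recursion. Having defined $\langle F_\beta:\beta<\alpha\rangle$, put $U_\alpha=\bigcup_{\beta<\alpha}F_\beta$; if some $F\in\mc{G}$ satisfies $F\bez U_\alpha\notin\ii$, pick such an $F$ to be $F_\alpha$, and otherwise stop. Since every $\alpha<\omega_1$ is a countable ordinal, each such $U_\alpha$ is Borel and so is $H_\alpha:=F_\alpha\bez U_\alpha$; by construction the $H_\alpha$ are $\ii$-positive, and they are pairwise disjoint, because for $\beta<\alpha$ we have $H_\alpha\cap F_\beta=\0$ while $H_\beta\se F_\beta$. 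Were the recursion to run through all of $\omega_1$, the family $\{H_\alpha:\alpha<\omega_1\}$ would be an uncountable pairwise disjoint collection of $\ii$-positive Borel sets, contradicting the ccc; hence the recursion terminates at some countable $\delta$, and the stopping rule gives $F\bez U_\delta\in\ii$ for every $F\in\mc{G}$. So $\mc{G}_0:=\{F_\alpha:\alpha<\delta\}$ is as required, with $\bigcup\mc{G}_0=U_\delta$.

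Now fix $A\se\RR$ and apply the above to $\mc{G}=\{F\in\bb:F\cap A=\0\}$, which is nonempty since $\0\in\mc{G}$; set $B=\RR\bez\bigcup\mc{G}_0$. Then $B$ is Borel, and $A\se B$ because $A$ is disjoint from every member of $\mc{G}_0$, hence from $\bigcup\mc{G}_0$. To check the hull property, take any $B'\in\bb$ with $A\se B'\se B$. Then $B\bez B'$ is Borel and, since $A\se B'$, disjoint from $A$; thus $B\bez B'\in\mc{G}$, so $(B\bez B')\bez\bigcup\mc{G}_0\in\ii$ by the essential-union property. But $B\bez B'\se B=\RR\bez\bigcup\mc{G}_0$ is already disjoint from $\bigcup\mc{G}_0$, so $(B\bez B')\bez\bigcup\mc{G}_0=B\bez B'$, and therefore $B\bez B'\in\ii$. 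Hence $B=[A]_\ii$.

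The one genuinely delicate point is the transfinite recursion: one must keep the partial unions inside $\bb$ (which works precisely because the recursion never escapes the countable ordinals, the $H_\alpha$ forming a pairwise disjoint family of $\ii$-positive Borel sets) and one must make sure the stopping rule delivers the essential-union property against \emph{every} member of $\mc{G}$, not just cofinally many. It is perhaps worth noting that this particular route uses only the ccc; the Borel-base hypothesis is what one would invoke in the slightly more flexible variant where $\mc{G}=\{F\in\bb:A\cap F\in\ii\}$, in which after forming $\bigcup\mc{G}_0$ one replaces each $A\cap F$ by a Borel set in $\ii$ containing it, so as to keep $A$ inside the hull.
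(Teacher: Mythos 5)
Your proof is correct and is essentially the paper's argument in dual packaging: the paper starts from a Borel superset of $A$ and successively removes pairwise disjoint $\ii$-positive Borel sets $C_\alpha$ lying outside $A$, with ccc forcing the recursion to stop at a countable stage --- exactly your disjoint family $\{H_\alpha\}$ and your termination argument, your hull $\RR\bez U_\delta$ playing the role of the paper's $\bigcap_{\alpha<\kappa}B_\alpha$. Your closing observation is also accurate: for the hull property as defined here the ccc alone carries the construction, the Borel base entering only in variants (or in the trivial case $A\in\ii$).
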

\begin{proof}
Let $A$ be a set. If $A\in\mc{I}$ we are done. If not, let us take Borel $B_0\es A$. If $B_0\bez A$ contains some $\mc{I}$-positive Borel set $C_1$, then let $B_1=B_0\bez C_1$. Let say that we are at the step $\alpha<\omega_1$. If $\bigcap_{\beta<\alpha}B_{\beta}\bez A$ does not contain any $\mc{I}$-positive Borel set, we are done. Otherwise let $C_\alpha$ be that set and set $B_\alpha=\bigcap_{\beta<\alpha}B_{\beta}\bez C_\alpha$. By ccc a family $\{C_\alpha: \alpha<\omega_1\}$ is countable, which means that the above procedure stabilizes at some countable step $\kappa$ and $\bigcap_{\alpha<\kappa}B_{\alpha}$ is the set.
\end{proof}
Let us recall that a set $M$ is $\ii$-measurable if it belongs to $\sigma(\bb\cup\ii)$- a $\sigma$-algebra of sets generated by Borel sets and $\ii$. Now let us specify a notion of fullness.
\begin{defi}
We say that a set $A\se B$ is full in $B$ with respect to $\ii$, if for each $\ii$-measurable set $M$ we have $M\cap A\in\ii \IFF M\cap B\in\ii$.
\end{defi}
We will call a set $A$ comeager in $B$ if $B\bez A$ is meager. Let us give somewhat more intuitive characterization of full subsets with respect to measure.
\begin{prop}\label{being full with respect to measure}
Let $A\se B$. $(\forall M\in\sigma(\bb\cup\nn))(\om(A\cap M)=\om(B\cap M))\IFF$ $A$ is full in $B$ with respect to $\nn$.
\end{prop}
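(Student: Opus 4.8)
The plan is to prove the two implications separately. The forward direction ``$\IMP$'' is essentially a reformulation, while ``$\Leftarrow$'' requires exactly one application of a Borel hull of measure type together with subadditivity of outer measure.

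For ``$\IMP$'', recall that a set $N$ lies in $\nn$ precisely when $\om(N)=0$, and that the $\nn$-measurable sets are exactly the members of $\sigma(\bb\cup\nn)$. So, assuming $\om(A\cap M)=\om(B\cap M)$ for every $M\in\sigma(\bb\cup\nn)$ and fixing an $\nn$-measurable $M$, one reads off $A\cap M\in\nn \IFF \om(A\cap M)=0 \IFF \om(B\cap M)=0 \IFF B\cap M\in\nn$, which is precisely the condition for $A$ to be full in $B$ with respect to $\nn$. No care about infinite values is needed, since one outer measure vanishes iff the other does.

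For ``$\Leftarrow$'', suppose $A$ is full in $B$ with respect to $\nn$ and fix $M\in\sigma(\bb\cup\nn)$. Monotonicity of $\om$ gives $\om(A\cap M)\le\om(B\cap M)$, so it remains to prove the reverse inequality. First I would take a Borel set $H$ with $A\cap M\se H$ and $\lambda(H)=\om(A\cap M)$ -- either a classical $G_\delta$ cover of $A\cap M$, or the Borel hull $[A\cap M]_\nn$ provided above (one checks from the defining property of the hull that its Lebesgue measure equals $\om(A\cap M)$). The key observation is that $M\bez H$ belongs to $\sigma(\bb\cup\nn)$, hence is $\nn$-measurable, and that $A\cap M\se H$ forces it to be disjoint from $A$; thus $(M\bez H)\cap A=\0\in\nn$. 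Fullness then yields $(M\bez H)\cap B\in\nn$, i.e. $\om\big((M\bez H)\cap B\big)=0$. Writing $B\cap M$ as the union of $B\cap M\cap H$ and $B\cap(M\bez H)$ and using finite subadditivity of outer measure together with $\om(H)=\lambda(H)$,
$$\om(B\cap M)\le\om(B\cap M\cap H)+\om\big(B\cap(M\bez H)\big)\le\lambda(H)+0=\om(A\cap M),$$
which completes the argument.

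I do not expect a genuine obstacle: the only non-formal ingredient is the existence of a Borel set of the same outer measure covering $A\cap M$, plus subadditivity of $\om$. The one step deserving attention is the idea of testing fullness against the complement $M\bez H$ of such a hull (rather than against $H$ itself), and the remark that $H$ need not be contained in $M$ -- which is harmless, as the inclusion $A\cap M\se H$ already gives $A\cap(M\bez H)=\0$. One should likewise keep track of the direction in which the fullness equivalence is invoked, here passing from the (trivially null) set $(M\bez H)\cap A$ to the nullity of $(M\bez H)\cap B$.
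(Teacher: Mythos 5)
Your proof is correct and takes essentially the same route as the paper: both hinge on covering $A\cap M$ by a Borel ($G_\delta$) set $H$ with $\lambda(H)=\om(A\cap M)$ and then applying the fullness hypothesis to the complementary measurable test set, on which $A$ is absent but $B$ would have positive outer measure. The only difference is presentational --- you argue directly with $M\bez H$ while the paper argues by contradiction with $G_2\bez G_1$ (introducing a second cover of $B\cap M$); your variant is in fact slightly tidier, since intersecting with $M$ from the start makes the nullity of the $A$-part immediate.
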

\begin{proof}
Let us consider the nontrivial implication "$\Leftarrow$". Suppose that there is a measurable set $M$ such that $0\neq\om(A\cap M)=\delta <\om(B\cap M)$. Let $G_1$ and $G_2$ be $G_\delta$ sets covering $A\cap M$ and $B\cap M$ respectively such that $\lambda(G_1)=\om(A\cap M)$ and $\lambda(G_2)=\om(B\cap M)$. Then $\om(G_2\bez G_1\cap A)=0$, but $\om(G_2\bez G_1\cap B)\geq \om(G_2\cap B)-\om(G_1)>0$.
\end{proof}
It is easy to check that for sets $A$ and $B$, $\lambda^*(B)<\infty$, $A$ is full in $B$ $\IFF$ $A$ and $B$ have the same outer measure.
\\
Let us conclude this section with the two following facts which we will find useful later.
\begin{prop}\label{countable union of full subsets of decomposition is full}
If $A=\bigcup_{n\in\omega} A_n$ and we have sets $B_n, n\in\omega$, full in $A_n$, then $\bigcup_{n\in\omega}B_n$ is full in $A$.
\end{prop}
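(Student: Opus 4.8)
The plan is simply to unwind the definition of fullness; nothing beyond the $\sigma$-additivity of the ideal in question is needed. Work with a fixed $\sigma$-ideal $\ii$, and write $B=\bigcup_{n\in\omega}B_n$. Since each $B_n$ is full in $A_n$, in particular $B_n\se A_n$, so $B\se\bigcup_{n\in\omega}A_n=A$. This already disposes of one half of the desired equivalence for free: for every $\ii$-measurable set $M$ the inclusion $M\cap B\se M\cap A$ gives $M\cap A\in\ii\IMP M\cap B\in\ii$.

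For the other direction, fix an arbitrary $\ii$-measurable set $M$ and suppose $M\cap B\in\ii$. For each $n\in\omega$ we have $M\cap B_n\se M\cap B\in\ii$, hence $M\cap B_n\in\ii$. Now apply the fullness of $B_n$ in $A_n$ to this very set $M$ (it is $\ii$-measurable, hence an admissible test set): from $M\cap B_n\in\ii$ we obtain $M\cap A_n\in\ii$. Finally $M\cap A=\bigcup_{n\in\omega}(M\cap A_n)$ is a countable union of members of the $\sigma$-ideal $\ii$, so $M\cap A\in\ii$. Since $M$ was arbitrary, $B$ is full in $A$.

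I do not anticipate any real obstacle; the statement is essentially bookkeeping. The two points that deserve a line of care are that ``full in $A_n$'' is defined so as to include the inclusion $B_n\se A_n$ (which is what makes $B\se A$, and hence the easy direction, automatic), and that one and the same $\ii$-measurable set $M$ serves simultaneously as a test set in each of the countably many fullness hypotheses — this is precisely what allows the countable additivity of $\ii$ to close the argument.
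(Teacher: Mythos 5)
Your proof is correct and complete: the inclusion $B_n\se A_n$ built into the definition of fullness gives the easy direction, and applying each fullness hypothesis to the single test set $M$ followed by countable additivity of $\ii$ gives the other. The paper states this proposition without any proof, treating it as routine, and your argument is exactly the standard one it implicitly relies on.
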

\begin{prop}\label{Continuity from below of outer Lebesgue measure}
Lebesgue outer measure $\lambda^*$ is continuous from below.
\end{prop}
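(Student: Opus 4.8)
The plan is to reduce the statement to the (well-known) continuity from below of the Lebesgue measure $\lambda$ on measurable sets, by passing to measurable hulls. So let $A_1\se A_2\se\dots$ be an increasing sequence with union $A$. Since $\nn$ has a Borel base, for each $n$ I would fix a $G_\delta$ set $G_n\es A_n$ with $\lambda(G_n)=\om(A_n)$ (equivalently, this is just the existence of a measurable hull of $A_n$, read off directly from the definition of $\om$).

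The naive attempt $\om(A)=\lim_n\om(A_n)=\lim_n\lambda(G_n)$ does not work directly, because the $G_n$ need not be nested. The remedy is to replace $G_n$ by the tail intersection $H_n=\bigcap_{m\geq n}G_m$. Then $(H_n)_{n\in\omega}$ is an increasing sequence of measurable sets; one has $A_n\se H_n$ because $A_n\se A_m\se G_m$ for every $m\geq n$; and $\lambda(H_n)\leq\lambda(G_n)=\om(A_n)$ while $\lambda(H_n)\geq\om(A_n)$ by monotonicity of outer measure, so $\lambda(H_n)=\om(A_n)$.

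Now $A=\bigcup_n A_n\se\bigcup_n H_n$, so by monotonicity of $\om$ and continuity from below of $\lambda$ on measurable sets, $\om(A)\leq\lambda\bigl(\bigcup_n H_n\bigr)=\lim_n\lambda(H_n)=\lim_n\om(A_n)$. The reverse inequality $\om(A)\geq\lim_n\om(A_n)$ is immediate since $A_n\se A$ for each $n$. Hence $\om(A)=\lim_n\om(A_n)$, as claimed.

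The only genuine subtlety — the step I would flag as the main obstacle — is precisely the non-monotonicity of arbitrarily chosen hulls, which forces one to intersect tails rather than use the $G_n$ themselves; once the monotone family $(H_n)$ is in place, the rest is routine bookkeeping with monotonicity of $\om$ and the standard continuity-from-below property of $\lambda$.
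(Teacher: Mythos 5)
Your proof is correct, but it takes a different route from the paper's. The paper fixes a Borel hull $\widetilde{A}_n$ of each $A_n$ and a hull $\widetilde{A}$ of $A$, and writes $\om(\bigcup_{k<n}A_k)=\lambda(\bigcup_{k<n}\widetilde{A}_k)\to\lambda(\bigcup_{n}\widetilde{A}_n)=\lambda(\widetilde{A})=\om(A)$; this leans (implicitly) on the fact that a finite or countable union of Borel hulls is again a hull of the union, which is what makes the first and last equalities legitimate, and on the hull machinery already set up in the preliminaries. You instead sidestep that lemma entirely: you only need measurable covers $G_n\es A_n$ of the correct measure (not genuine hulls), and you repair their failure to be nested by passing to the tail intersections $H_n=\bigcap_{m\geq n}G_m$, after which monotonicity pins down $\lambda(H_n)=\om(A_n)$ and ordinary continuity from below of $\lambda$ finishes the argument. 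Your version is more self-contained and correctly isolates the one real subtlety (arbitrary covers need not be monotone); the paper's version is shorter but quietly assumes the union-of-hulls fact, and also phrases the statement for arbitrary sequences via their partial unions rather than for an increasing sequence --- the two formulations are of course equivalent.
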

\begin{proof}
Let $(A_n: n\in\omega)$ be a sequence of sets, $\bigcup_{n\in\omega}A_n=A$. Let $\widetilde{A}$ be a Borel hull of $A$ and $\widetilde{A}_n$ be a Borel hull of $A_n$, $n\in\omega$. Then $\lambda^*(\bigcup_{k<n}A_k)=\lambda(\bigcup_{k<n}\widetilde{A}_k)\rightarrow^{n\rightarrow\infty}\lambda(\bigcup_{n\in\omega}\widetilde{A}_n)=\lambda(\widetilde{A})=\lambda^*(A)$ by continuity of $\lambda$.
\end{proof}

{\normalsize\section{DECOMPOSITIONS}}
\begin{center}
\footnotesize{2.1 THE CASE OF THE CATEGORY}
\end{center}
Let $X\se\RR$ be of the second category (otherwise we have nothing to do), enumerated $(X=\{x_\alpha: \alpha<\kappa\})$, and let $\preceq$ be an order on $X$ given by the enumeration. Thanks to Proposition \ref{countable union of full subsets of decomposition is full} we may assume that $X\se [0, 1]$.
\\
Without loss of generality we may assume that every initial segment of $X$ is meager. Otherwise we take the smallest initial segment $I_0$ of $X$ which is of the second category and remove its hull $[I_0]$ from $X$. We repeat the procedure for $X\bez [I_0]$, find $I_1$ - the smallest initial segment of $X\bez [I_0]$ and remove its hull from $X\bez [I_0]$ and so on. By ccc we will stop after countably many steps. Now if we have a collection $\{I_n: n\in\omega\}$ of these smallest initial segments of the second category, then it is sufficient to find decompositions of sets $I_n$, $n\in\omega$. The rest follows by Proposition \ref{countable union of full subsets of decomposition is full}.
\\
Let us fix a set $T\se\RR^2$ as follows
\begin{linenomath*}
\[T=\{(x,y)\in X\times X: y\prec x\},\]
\end{linenomath*}
where $\prec$ is the strict well order determined by the enumeration of $X$. Let us observe that each vertical slice $T_x$ is meager and each horizontal slice $T^y$ is equal to $X$ except some set of the first category. For every $x\in X$ and $n\in \omega$ let us denote nowhere dense sets $F_x^n$ such that $\bigcup_{n\in\omega}F^n_x=T_x$. We may assume that for fixed $x\in X$ sets $F_x^n, n\in\omega,$ are pairwise disjoint. Clearly
\begin{linenomath*}
\[
X=\bigcup_{x\in X}\bigcup_{n\in\omega}(\{x\}\times F^n_x)=\bigcup_{n\in\omega}\bigcup_{x\in X}(\{x\}\times F^n_x).
\]
\end{linenomath*}
For every $n\in\omega$ let us denote $\bigcup_{x\in X}(\{x\}\times F^n_x)$ by $H_n$. Let $\{I_n: n\in\omega\}$ be an enumeration of intervals with rational endpoints which have intersection with $X$ of the second category. Before we continue we will prove the following lemma.
\begin{lem}\label{Lemma for Luzin}
Let a set $X\se\RR$ be nonmeager and $A\se\RR^2$ such that for every $x\in\RR$ $A_x$ is nowhere dense. Then a set
\begin{linenomath*}
\[
Y=\{y: X\bez A^y\in\mm\}
\]
\end{linenomath*}
is nowhere dense.
\end{lem}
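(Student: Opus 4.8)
The plan is to argue by contradiction. Suppose $Y$ is not nowhere dense; then $\overline{Y}$ contains a nonempty open interval $J$, and consequently $Y$ is dense in $J$ (every nonempty open subset of $J$ meets $\overline{Y}$, hence meets $Y$). The goal is to reach a contradiction with the nonmeagerness of $X$ by using the nowhere-density of all the vertical slices $A_x$ to cover $\RR$ by countably many convenient sets. Crucially I would avoid the Kuratowski--Ulam theorem here, since $A$ is an arbitrary subset of the plane with no assumed Baire property; the hypothesis on the vertical slices is exactly the ingredient that makes such a covering argument work in its place.

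First I would turn the nowhere-density of the slices into a countable cover of the line. Fix $x\in\RR$. Since $\overline{A_x}$ has empty interior and $J$ is open and nonempty, $J\bez\overline{A_x}$ is a nonempty open set, so it contains a nonempty open interval with rational endpoints that is disjoint from $A_x$. Enumerating the nonempty open subintervals of $J$ with rational endpoints as $\{I_n:n\in\omega\}$ and setting
\[
P_n=\{x\in\RR: I_n\cap A_x=\0\},
\]
the previous observation gives $\RR=\bigcup_{n\in\omega}P_n$. The useful reformulation is that $x\in P_n$ says exactly that $(x,y)\notin A$ for every $y\in I_n$, that is, $P_n\cap A^y=\0$ whenever $y\in I_n$.

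Next I would apply the Baire pigeonhole to $X$. From $X=\bigcup_{n\in\omega}(X\cap P_n)$ and $X\notin\mm$ there is an index $n_0$ with $X\cap P_{n_0}\notin\mm$. Since $I_{n_0}\se J$ is a nonempty open interval and $Y$ is dense in $J$, we may pick $y_0\in Y\cap I_{n_0}$. On one hand $y_0\in I_{n_0}$ gives $P_{n_0}\cap A^{y_0}=\0$, hence $X\cap P_{n_0}\se X\bez A^{y_0}$; on the other hand $y_0\in Y$ gives $X\bez A^{y_0}\in\mm$. Together these yield $X\cap P_{n_0}\in\mm$, contradicting the choice of $n_0$, and the lemma follows. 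I do not anticipate a genuine obstacle in carrying this out; the single point that needs care is recognizing that nowhere-density of the vertical slices is precisely what produces the countable cover $\{P_n:n\in\omega\}$, thereby replacing any appeal to a Fubini- or Kuratowski--Ulam-type theorem.
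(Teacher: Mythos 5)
Your proof is correct; every step checks out, and in particular the covering $\RR=\bigcup_n P_n$ does follow from the nowhere-density of each $A_x$, and the pigeonhole on the nonmeager set $X$ plus the choice of a single $y_0\in Y\cap I_{n_0}$ does produce the contradiction you describe. The route is genuinely different from the paper's, though dual to it. The paper also argues by contradiction, but in the transposed direction: it picks a countable set $Q\se Y$ dense in $Y$ (hence somewhere dense, since $Y$ is assumed not nowhere dense), observes that $\bigcup_{y\in Q}(X\bez A^y)$ is meager, so that $X\cap\bigcap_{y\in Q}A^y$ is nonmeager and in particular nonempty, and then notes that any $x$ in this intersection satisfies $Q\se A_x$, contradicting the nowhere-density of the single slice $A_x$. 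In short, the paper uses countably many $y$'s and one $x$, deriving a contradiction on a vertical slice; you use a countable family of rational intervals (equivalently, countably many potential $y$'s) and a nonmeager set of $x$'s, deriving a contradiction on a horizontal slice. The paper's version is a line or two shorter because separability hands it the countable dense set $Q$ directly, whereas your version makes explicit the standard mechanism by which nowhere-dense vertical sections are exploited (the sets $P_n$ indexed by rational intervals) and localizes the use of $Y$'s density to a single point; either argument cleanly avoids Kuratowski--Ulam, which is the essential requirement here since $A$ need not have the Baire property.
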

\begin{proof}
Suppose that it is not. Then there exists a countable set $Q\se Y$ dense in $Y$. Then $\bigcup_{y\in Q}X\bez A^y\in\mm$, so
\begin{linenomath*}
\[
X\bez (\bigcup_{y\in Q}X\bez A^y)=\bigcap_{y\in Q}A^y
\]
\end{linenomath*}
is nonmeager and thus nonempty. It means that there is $x\in \bigcap_{y\in Q}A^y$  and
\begin{linenomath*}
\[
(\forall y\in Q)(x\in A^y)\equiv (\forall y\in Q)(y\in A_x)
\]
\end{linenomath*}
which means $Q\se A_x$, a contradiction with $A_x$ being nowhere dense.
\end{proof}
Now, let us consider a set
\begin{linenomath*}
\[
Y=\{y\in X: (\forall k\in\omega) (\exists^{\infty}n)(H^y_n\cap I_k\notin\mm)\}.
\]
\end{linenomath*}
We claim that $Y$ is comeager in $X$. Suppose that $B=X\bez Y$ is nonmeager.
\begin{linenomath*}
\[
B=\{y\in X: (\exists k\in\omega) (\forall^{\infty}n)(H^y_n\cap I_k\in\mm)\}
\]
\end{linenomath*}
Then there exist $n_0, k_0\in\omega$ such that a set
\begin{linenomath*}
\[
B_{n_0,k_0}=\{y:\in X: (\forall n>n_0)(H_n^y\cap I_{k_0}\in\mm)\}
\]
\end{linenomath*}
is nonmeager. Let us denote $H=\bigcup_{n\leq n_0}H_n$. Clearly, for every $x\in X$ the slice $H_x$ is nowhere dense. For every $y\in X$ we have that $\bigcup_{n\in\omega}H_n^y$ is comeager in $X$ and, since for each $y\in B_{n_0,k_0}$ a set $\bigcup_{n> n_0}H_n^y\cap I_{k_0}$ is meager, $H^y$ is comeager in $X\cap I_{k_0}$ for nonmeager many $y$. On the other hand by Lemma \ref{Lemma for Luzin} we have that $\{y: X\cap I_{k_0}\bez H^y\in\mm\}$ is nowhere dense which brings a contradiction.
\\
Therefore $Y$ is nonempty so let us pick $y\in Y$. Let us construct by induction two sequences of sets $(X^1_k: k\in\omega)$ and $(X^2_k: k\in\omega)$. For $k=0$ we have infinitely many $n's$ such that $H^y_n\cap I_0\notin\mm$, so let us pick two, say, $n_0^1$ and $n_0^2$ and let us set $X_0^1=H^y_{n_0^1}$ and $X_0^2=H^y_{n_0^2}$. Notice that they are disjoint, since sets $F^n_x$ were pairwise disjoint. Let us assume that at the step $k\in\omega$ we have already sequences $(X^1_m: m<k)$ and $(X^2_m: m<k)$, for which $X^1_m=H^y_{n_m^1}$ and $X^2_m=H^y_{n_m^2}$ for all $m<k$. We still have infinitely many natural numbers $n$ distinct from each of $n_m^i$, $i\in\{1, 2\}, m<k,$ such that $H^y_n\cap I_k\notin\mm$, so let us pick $n^1_k$ and $n^2_k$ such that $H^y_{n^i_k}\cap I_k\notin\mm, i\in\{1, 2\}$ and set $X^i_k=H^y_{n^i_k}, i\in\{1, 2\}$. Sets $X_1=\bigcup_{n\in\omega}X^1_n$ and $X_2=\bigcup_{n\in\omega}X^2_n$ constitute the desired decomposition.
\begin{center}
\footnotesize{2.2 THE CASE OF MEASURE}
\end{center}

Similarly to the previous case let $X\se[0,1]$ be a set of positive outer measure,  $X=\{x_\alpha: \alpha<\kappa\}$, such that its initial segments are null. Again, let us fix a set $T\se\RR^2$ as follows
\begin{linenomath*}
\[
T=\{(x,y)\in X\times X: y\prec x\},
\]
\end{linenomath*}
where $\prec$ is the strict well order determined by the enumeration of $X$. Let us observe that each vertical slice $T_x$ is null and each horizontal slice $T^y$ is equal to $X$ except some set of measure zero. Let $\epsilon>0$. For every $x\in X$ and $n\in \omega$ let us denote intervals with rational endpoints $I_x^n$ such that $\bigcup_{n\in\omega}I^n_x\es T_x$ and $\lambda(\bigcup_{n\in\omega}I_x^n)<\epsilon$. Clearly
\begin{linenomath*}
\[
X=\bigcup_{x\in X}\bigcup_{n\in\omega}(\{x\}\times (I^n_x\cap T_x))=\bigcup_{n\in\omega}\bigcup_{x\in X}(\{x\}\times (I^n_x\cap T_x)).
\]
\end{linenomath*}
For every $n\in\omega$ let us denote $\bigcup_{x\in X}(\{x\}\times I^n_x)$ by $H_n$. For every $y\in X$ we have that $\bigcup_{n\in\omega}H_n^y=X$ (mod $\nn$), therefore by Proposition \ref{Continuity from below of outer Lebesgue measure} for every $y\in X$ there exists $n_y\in\omega$ such that $\lambda^*(\bigcup_{k<n_y}H_k^y)>\lambda^*(X)-\epsilon$. Let us denote
\begin{linenomath*}
\[
Y_n=\{y\in X: \lambda^*(\bigcup_{k<n}H_k^y)>\lambda^*(X)-\epsilon\}.
\]
\end{linenomath*}
Clearly $\bigcup_{n\in\omega}Y_n=X$, so again by Proposition \ref{Continuity from below of outer Lebesgue measure} there exists $N\in\omega$ such that $\lambda^*(\bigcup_{k<N}Y_k)>\lambda^*(X)-\epsilon$. For $y\in\bigcup_{k<N}Y_k$ we have $\lambda^*(\bigcup_{k<N}H_k^y)>\lambda^*(X)-\epsilon$. Let us denote $\bigcup_{k<N}H_k$ by $H$. Before we continue we will prove the following lemma.

\begin{lem}\label{Fubini for Lebesgue outer measure}
Let $X\se[0,1]^2$. Assume that there is a set $A\se [0,1]$ such that $\lambda^*(A)=a$ and $(\forall x\in A)( \lambda^*(X_x)\geq b)$. Then $\lambda^*(X)\geq ab$.
\end{lem}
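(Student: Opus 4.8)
The plan is to reduce the claim to the classical Fubini theorem by replacing $X$ with a Borel set of the same outer measure. First I would fix a Borel set $B\se[0,1]^2$ with $X\se B$ and $\lambda(B)=\lambda^*(X)$ --- for instance a $G_\delta$ hull of $X$, which exists since Lebesgue outer measure on the plane is generated by open coverings. Since every vertical section of a Borel subset of the plane is a Borel (hence Lebesgue measurable) subset of the line, and since $X\se B$ gives $X_x\se B_x$ for each $x$, monotonicity of outer measure yields $\lambda(B_x)\geq\lambda^*(X_x)$ for all $x$; in particular $\lambda(B_x)\geq b$ for every $x\in A$.

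Next I would apply Fubini's theorem to the Borel set $B$: the map $x\mapsto\lambda(B_x)$ is Lebesgue measurable on $[0,1]$ and $\lambda(B)=\int_0^1\lambda(B_x)\,dx$. At this point one cannot integrate over $A$ directly, because $A$ is not assumed measurable; instead I would pass to the level set
\[
A'=\{x\in[0,1]:\lambda(B_x)\geq b\},
\]
which is measurable, being the preimage of $[b,\infty)$ under a measurable function, and which contains $A$ by the previous paragraph. Hence $\lambda(A')\geq\lambda^*(A)=a$.

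It then remains to estimate
\[
\lambda^*(X)=\lambda(B)=\int_0^1\lambda(B_x)\,dx\ \geq\ \int_{A'}\lambda(B_x)\,dx\ \geq\ b\,\lambda(A')\ \geq\ ab,
\]
which is the asserted inequality. The only genuine obstacle is that neither $A$ nor the slices $X_x$ need be measurable; both issues disappear once one works with the Borel hull $B$ --- whose sections are automatically measurable and dominate those of $X$ in outer measure --- and with the measurable level set $A'$ in place of $A$. The rest is a routine appeal to Fubini's theorem and monotonicity of $\lambda^*$.
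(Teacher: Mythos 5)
Your proof is correct and follows essentially the same route as the paper: replace $X$ by a measurable hull $G$ with $\lambda(G)=\lambda^*(X)$, apply Fubini to $x\mapsto\lambda(G_x)$, and integrate over the measurable level set $\{x:\lambda(G_x)\geq b\}$, which contains $A$ and hence has measure at least $a$. The only (harmless) difference is that the paper additionally introduces the Borel hull of $A$ to see that this level set has measure exactly $a$, whereas you observe that the lower bound $\lambda(A')\geq\lambda^*(A)=a$ already suffices.
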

\begin{proof}
Let $G\es X$ be such that $\lambda(G)=\lambda^*(X)$. Without loss of generality let us assume that $G\se \widetilde{A}\times [0,1]$, $\widetilde{A}$ - the Borel hull of $A$ with respect to measure. Let us consider a function $f: [0,1]\rightarrow [0,1]$, $f(x)=\lambda(G_x)$. Since $f$ is measurable we may write
\begin{linenomath*}
\begin{align*}
\lambda(G)&=\int_{[0,1]}f(x)d \lambda(x)=\int_{\widetilde{A}}f(x)d \lambda(x)=\\
&=\int_{\{x\in\widetilde{A}: f(x)\geq b\}}f(x)d \lambda(x)+\int_{\{x\in\widetilde{A}: f(x)< b\}}f(x)d \lambda(x).
\end{align*}
\end{linenomath*}
Since $\{x\in\widetilde{A}: f(x)\geq b\}\es A$ we have that $\lambda(\{x\in\widetilde{A}: f(x)\geq b\})=a$ and $\lambda(\{x\in\widetilde{A}: f(x)< b\})=0$, so
\[
\lambda^*(X)=\lambda(G)=\int_{\{x\in\widetilde{A}: f(x)\geq b\}}f(x)d \lambda(x)\geq b\int_{\{x\in\widetilde{A}: f(x)\geq b\}}1 \,d \lambda(x)=ab.
\]
\end{proof}
Now, by Lemma \ref{Fubini for Lebesgue outer measure} we have that $\lambda^*(H)>(\lambda^*(X)-\epsilon)^2$. Let $\{G_n: n\in\omega\}$ be a family of finite unions of intervals with rational endpoints such that $\{G_n: n\in\omega\}=\{\bigcup_{k<N}I_x^k: x\in X\}$. For every $n\in\omega$ let $A_n=\{x\in X: G_n=\bigcup_{k<N}I_x^k \}$. We see that we cannot separate sets $A_n$ with their Borel hulls $[A_n]$, otherwise we would have $H\se \bigcup_{n\in\omega}([A_n]\times G_n)$ which has a measure $<\om(X)\cdot\epsilon$. So there are $n_0\neq m_0$ such that $[A_{n_0}]\cap[A_{m_0}]$ is of positive measure. Sets $X^1_0=A_{n_0}\cap[A_{n_0}]\cap[A_{m_0}]$ and $X^2_0=A_{m_0}\cap[A_{n_0}]\cap[A_{m_0}]$ constitute a desired decomposition of $X\cap [A_{n_0}]\cap[A_{m_0}] $. We repeat the whole procedure for $X_0=X\bez [A_{n_0}]\cap[A_{m_0}]$ and so forth; by ccc we shall have a decomposition of $X$ after countably many steps.

\end{document}